\documentclass[11pt]{amsart}

\usepackage{amscd,amsmath,latexsym,amsthm,amsfonts,amssymb,graphicx,color,geometry,soul}

\usepackage{enumerate}

\usepackage[dvipsnames]{xcolor}

\usepackage{hyperref}
\hypersetup{
colorlinks=true,
linkcolor=blue,
citecolor=cyan
}

\allowdisplaybreaks

\usepackage[norefs,nocites]{refcheck}

\usepackage{geometry}
\geometry{a4paper, portrait, rmargin=25mm, lmargin=25mm, bmargin=25mm, tmargin=25mm, footskip=1cm}

\newtheorem{theorem}{Theorem}[section]
\newtheorem{proposition}[theorem]{Proposition}

\newtheorem{corollary}[theorem]{Corollary}
\newtheorem{example}[theorem]{Example}

\numberwithin{equation}{section}

\begin{document}


\title[A summability principle and applications]{A summability principle and applications}

\author[Albuquerque]{N. Albuquerque}
\address[N. Albuquerque]{Departamento de Matem\'{a}tica \newline\indent
Universidade Federal da Para\'{i}ba \newline\indent
Jo\~ao Pessoa - PB \newline\indent
58.051-900 (Brazil)}
\email{ngalbuquerque@mat.ufpb.br}

\author[Ara\'{u}jo]{G. Ara\'{u}jo}
\address[G. Ara\'{u}jo]{Departamento de Matem\'{a}tica \newline\indent
Universidade Estadual da Para\'{i}ba \newline\indent
Campina Grande - PB \newline\indent
58.429-500 (Brazil)}
\email{gdasaraujo@gmail.com}

\author[Rezende]{L. Rezende}
\address[L. Rezende]{Departamento de Matem\'{a}tica \newline\indent
Universidade Federal da Para\'{i}ba \newline\indent
Jo\~ao Pessoa - PB \newline\indent
58.051-900 (Brazil)}
\email{lirezendestos@gmail.com}

\author[Santos]{J. Santos}
\address[J. Santos]{Departamento de Matem\'{a}tica \newline\indent
Universidade Federal da Para\'{i}ba \newline\indent
Jo\~ao Pessoa - PB \newline\indent
58.051-900 (Brazil)}
\email{joedson.santos@academico.ufpb.br}

\subjclass[2020]{46G25, 47H60.}

\keywords{Inclusion Theorem, multilinear summing operators, Grothendieck}


\begin{abstract}
This paper investigates summability principles for multilinear summing operators. The main result presents a novel inclusion theorem for a class of summing operators, which generalizes several classical results. As applications, we derive improved estimates for Hardy--Littlewood inequalities on multilinear forms and prove a Grothendieck-type coincidence result in anisotropic settings.
\end{abstract}

\maketitle

\section{Introduction}

Summing operators were introduced in the seminal work of A. Grothendieck \cite{resume} in 1953 and further developed by J. Lindenstrauss and A. Pe\l czy'nski \cite{lindpel} in 1968. In the 1980s, A. Pietsch systematized the linear theory in his foundational monograph \cite{pietsch-book} and subsequently extended it to the multilinear setting \cite{pietsch83}. Since then, the theory has become a central subject in Functional Analysis, leading to extensive research. We refer to the classical works \cite{DJT,dtp-book,pietsch-book} for a comprehensive treatment, and to \cite{abps-jfa,abps-israel,matos,dav,zal} (among others) for more recent developments.

%
%

Regularity phenomena that improve or preserve summability properties of operators are fundamental in Functional Analysis. Among these, inclusion theorems are a distinguished class of results and have been extensively studied (see, e.g., \cite{AR,bbbb,bjp,bot,reg,dav}). In the multilinear context, such results become significantly more delicate (cf. \cite{bjp,dav}).

The main contribution of this work is an anisotropic inclusion theorem for the class of $\Lambda$-summing operators (see Section~\ref{sec2}), which extends two of the main important multilinear summing classes: \emph{absolutely} and \emph{multiple} summing operators. For instance, a particular case of the main result we prove is an inclusion of the type
\[
\Pi_{\left(r; p \right)}^{\Lambda} \left(E_{1}, \dots, E_{m}; F\right)
\subset
\Pi_{\left(s; q \right)}^{\Lambda} \left( E_{1}, \dots, E_{m}; F\right)
\]
where \(\Lambda\) can be both classes \emph{absolutely} or \emph{multiple} summing operators, \(r,s,p,q\) are suitable parameters and, as usual, \(E_1,\dots,E_m,F\) stand for Banach spaces. The main result (Theorem \ref{inclusion_blocks}) yields several applications, including connections with the Bohnenblust--Hille and Hardy--Littlewood inequalities, as well as a Grothendieck-type theorem, on which we set conditions on the parameters $(s;q)$ in order to coincidence result for multiple summing operators occurs:
\[
\Pi_{(s;q)}^{\mathrm{ms}} \left(^{m}\ell_{1};\ell_{2}\right) = \mathcal{L}\left(^{m}\ell_{1};\ell_{2}\right).
\]

The paper is organized as follows. Section \ref{sec2} contains preliminary material and we briefly discuss about the $\Lambda$-summing class and its well behavior block cases. Section \ref{sec3} is devoted to the proof of the inclusion theorem in a block-structured setting. Sections \ref{sec4} presents applications to classical inequalities and a Grothendieck-type coincidence result.

\section{Preliminaries and Key Concepts} \label{sec2}

We recall briefly some basic concepts and results. Our notation is standard, as in most textbooks on Banach space theory and Functional Analysis; we refer, e.g, \cite{bpt-book,fabian}. Always, $m$ denotes a positive integer. $E,E_1,\dots,E_m,F$ shall denote Banach spaces over the field \(\mathbb{K}\), which will be \(\mathbb{R}\) the field of real scalars or \(\mathbb{C}\) the field of complex scalars. We denote by $E^{\ast}$ and $B_{E}$ the topological dual and the closed unit ball of $E$, respectively. For $p\in [1,\infty)$, the Banach space $\ell_{p}^{w}(E)$ of weakly $p$-summable sequences on $E$ is the space of all sequences $(x_{j})_{j \in \mathbb{N}} \in E^\mathbb{N}$ such that
\[
\|(x_j)_{j\in\mathbb{N}}\|_{w,p}:= \sup_{x^\ast\in B_{E^\ast}}\Vert \left( x^\ast \left(x_{j}\right) \right) _{j=1}^{\infty }\Vert _{p} = \sup_{x^\ast\in B_{E^\ast}} \left( \sum_{j=1}^{\infty}|x^\ast(x_j)|^p\right)^{\frac{1}{p}}<\infty,
\]
\(\mathcal{L}(E_1,\dots, E_m;F)\) stands for the Banach space of all bounded $m$-linear (multilinear if $m>1$) operators \(T: E_1 \times \cdots \times E_m \to F\) endowed with the usual sup norm. We refer the reader to \cite{dineen,mujica} for more details on the general theory of multilinear operators. In order to keep the notation as simple as possible in the multilinear framework, $\mathbf{i} := (i_1,\dots,i_m) \in \mathbb{N}^m$ shall denote a multi-index, and $\mathbf{r} := (r_1,\ldots,r_m) \in [1,+\infty]^m$ a multi-parameter. Also we will write $T x_{\mathbf{i}} := T(x_{i_1},\ldots,x_{i_m})$ with \(x_{i_k} \in E_k, \ k=1, \dots, m\).

\subsection{Multilinear summing operators: classical approach}

The very first definition of summing multilinear operators dates back to A. Pietsch in \cite{pietsch83}. In modern terminology the concept is defined as follows. For $r\geq 1$ and $\mathbf{p} := (p_1,\ldots,p_m) \in\lbrack1,\infty)^{m}$, a multilinear operator $T:E_{1} \times\cdots\times E_{m} \to F$ is \emph{absolutely} $(r;\mathbf{p})$-summing if there exists a constant $C>0$ such that, 
\begin{equation}\label{mult_def}
\left\|  \left(  T( x_{i}^{1},\dots,x_{i}^{m} ) \right)_{i \in\mathbb{N}}\right\|_{\ell_r(F)}
= \left(
    \sum_{i=1}^{\infty} \left\| T\left(x_{i}^{1}, \dots, x_{i}^{m}\right) \right\|^{r}
  \right)^{\frac{1}{r}}
\leq C \prod_{j=1}^{m} \|(x_i^{j})_{i \in \mathbb{N}} \|_{w,p_{k}},
\end{equation}
for all $(x_{i}^{j})_{i\in\mathbb{N}} \in\ell_{p_{j}}^{w}(E_{j}), \, j=1,\dots,m$.

The study of summing operators was extended to the multilinear setting following Pietsch's work \cite{pietsch83}. Subsequently, several distinct research directions developed. Among these, alongside the theory of absolutely summing operators, one particularly fruitful class emerged through independent work by M. Matos \cite{matos} and F. Bombal, D. P\'erez-Garc\'ia and I. Vilanueva \cite{bombal}: an operator $T: E_{1} \times \cdots \times E_{m} \to F$ is called \emph{multiple} $(r;\mathbf{p})$-summing if it satisfies property \eqref{mult_def} with the norm on the left-hand side replaced by sums over all indices:
\begin{equation} \label{mult_norm}
\left\|  \left(  T  x_{\mathbf{i}} \right)_{\mathbf{i} \in\mathbb{N}^{m}} \right\|_{\ell_r(F)}
:= \left( 
      \sum_{\mathbf{i} \in\mathbb{N}^{m}} \left\| T x_{\mathbf{i}} \right\|^{r}
   \right)^{\frac{1}{r}}
\end{equation}

Another successful line of research considers replacing the norm in \eqref{mult_norm} (with sums over the full generalized matrix $\mathbb{N}^m$) by a mixed norm with multiple parameters $\mathbf{r}:=(r_1,\dots,r_m) \in [1,+\infty)^m$. Specifically, we define
\begin{equation} \label{anisotropic}
\left\|
  \left( T x_{\mathbf{i}} \right)_{\mathbf{i} \in\mathbb{N}^{m}}
\right\|_{\ell_{\mathbf{r}}(F)}
:= \left( \sum_{i_{1}=1}^{\infty}
     \left( \cdots 
       \left( \sum_{i_{m}=1}^{\infty} \left\| T x_{\mathbf{i}} \right\|^{r_{m}}
       \right)^{\frac{r_{m-1}}{r_{m}}} \cdots
     \right)^{\frac{r_{1}}{r_{2}}}
   \right)^{\frac{1}{r_{1}}},
\end{equation}
and in this case, the operator is called \emph{multiple} $(\mathbf{r}; \mathbf{p})$-summing. This approach has led to significant developments and applications, including extensions of the classical Hardy--Littlewood and Bohnenblust--Hille inequalities. For recent advances in this theory, see \cite{abps-jfa,abps-israel,aacnnpr,blaise,bbbb,dimant,paulino,reg,zal}. The framework with multiple parameters and the mixed norm in \eqref{anisotropic} is referred to as the \emph{anisotropic} case (sums over all indices), while the classical absolutely summing case \eqref{mult_def} (sums restricted to the diagonal) is called \emph{isotropic} (see \cite{botelho-jmaa20,botelho-tmna25}).

\subsection{Multilinear summing operators: unified approach}

In \cite{bpr2020} and \cite{popa-lma} it was independetly introduced a concept that encompasses the isotropic and anisotropic notions previously described, and, moreover, \emph{intermediate} cases (in some sense will discuss next) are also included. The crucial idea is to consider the sum (the strong $\ell_r$-norm of \eqref{mult_def}, \eqref{mult_norm} and \eqref{anisotropic}) taking indices over an arbitrary but fixed subset $\Lambda$ of $\mathbb{N}^m$. The precise approach is defined as follows. Given $\mathbf{r}, \mathbf{p} \in [1,+\infty)^m$ and $\Lambda \subset \mathbb{N}^{m}$ a non-void set of indices, an $m$-linear operator $T: E_1 \times \cdots \times E_m \to F$ is $\Lambda$-$(\mathbf{r}; \mathbf{s})$-summing if there is a constant $C>0$ such that
\begin{equation}  \label{lambda-def}
\left\| \left( T x_\mathbf{i} \right)_{\mathbf{i} \in \Lambda}  \right\|_{\ell_{\mathbf{r}} \left( F \right)}
\leq C \prod_{j=1}^{m} \|(x_i^{j})_{i \in \mathbb{N}} \|_{w,p_{k}},
\end{equation}
for all $(x_{i}^{j})_{i\in\mathbb{N}} \in\ell_{p_{j}}^{w}(E_{j}), \, j=1,\dots,m$. The \(\ell_{\mathbf{r}}\)-norm in the left can be seen as
\[
\left\| \left( T x_\mathbf{i} \right)_{\mathbf{i} \in \Lambda} \right\|_{\ell_\mathbf{r}(E)}
=
\left\| \left( T x_\mathbf{i} \cdot 1_\Lambda (\mathbf{i}) \right)_{\mathbf{i} \in \mathbb{N}^m} \right\|_{\ell_\mathbf{r} (E)},
\]
where $1_\Lambda$ is the characteristic function of $\Lambda$. The class of all operators that fulfills \eqref{lambda-def} the previous inequality is denoted by $\Pi_{(\mathbf{r};\mathbf{s})}^{\Lambda} \left( E_1,\dots,E_m;F \right)$, which is a Banach space endowed with the norm $\pi_{(\mathbf{r};\mathbf{s})}^\Lambda( \cdot )$ taken as the infimum of the constants $C>0$ satisfying \eqref{lambda-def}. Notice that, by taking $\Lambda = \text{Diag}\left( \mathbb{N}^m \right) := \left\{ \left( n,\cdots,n \right) \in \mathbb{N}^m : n \in \mathbb{N} \right\}$ and $\Lambda = \mathbb{N}^{m}$, the $\Lambda$-summing class $\Pi^\Lambda$ recovers both absolutely summing class $\Pi^{\textrm{as}}$, and multiple summing class $\Pi^{\textrm{ms}}$, respectively. It is worth noting that for $\Lambda \subset \Gamma \subset \mathbb{N}^{m}$, the inclusions $\Pi^{\textrm{ms}} \subset \Pi^\Gamma \subset \Pi^\Lambda \subset \Pi^{\textrm{as}}$ and norm inequalities $\pi^{\textrm{ms}}(\cdot) \leq \pi^\Gamma(\cdot) \leq \pi^\Lambda(\cdot) \leq \pi^{\textrm{as}} (\cdot)$ clearly follows.

The study of \(\Lambda\)-summing operators, where \(\Lambda\) is an arbitrary non-empty set of indices, can be a challenging problem. A primary difficulty lies in computing the norm on the left-hand side of \eqref{lambda-def}. One approach to address this is by introducing a well-behaved \emph{block structure} on \(\Lambda\), as outlined below.

For $i_{j}\in \mathbb{N}$, $j\in\{1,\ldots,m\}$, we define
\[
i_{j}\cdot e_{j}:=(0,\ldots,0,i_{j},0,\ldots,0)\in \mathbb{N}^m,
\]
with $i_j$ in the $j$-th coordinate. For $1\leq d\leq m$ and $\mathcal{I} := \{I_{1},\dots,I_{d}\}$ a partition of non-void disjoints subsets of $\{1,\ldots,m\}$ such that $\cup_{i=1}^{d}I_{i}=\{1,\ldots,m\}$, the set of index
\[
\Lambda = \mathcal{B}_{\mathcal{I}} := \left\{  \sum_{n=1}^{d}\sum_{j\in I_{n}}i_{n}\cdot e_{j}\ :\ i_{1},\ldots,i_{d}\in\mathbb{N}\right\}  \subseteq\mathbb{N}^{m},
\]
is called a \emph{block of $\mathcal{I}$-type}. Notice that, for an arbitrary (non-void) indices set $\Lambda$, there exists a positive integer $t$, and blocks $\mathcal{B}_{\mathcal{I}^{(j)}}$ of $\mathcal{I}^{(j)}$-type, $j=1,\dots,t$, such that
\[
\Lambda = \mathcal{B}_{\mathcal{I}^{(1)}} \cup \cdots \cup \mathcal{B}_{\mathcal{I}^{(t)}}.
\]

We focus our attention for the block structure, that is, when $\Lambda= \mathcal{B}_{\mathcal{I}}$ is a block of $\mathcal{I}$-type. Given Banach spaces $E_{1},\ldots,E_{m}$ and $x_{j}\in E_{j}$, for some $j\in\{1,\ldots,m\}$, we defined
\[
x_{j}\cdot e_{j}:=(0,\ldots,0,x_{j},0,\ldots,0)\in E_{1}\times\cdots\times E_{m},
\]
that is, $x_{j}\cdot e_{j}$ is the element of $E_{1}\times\cdots\times E_{m}$ with $x_j$ in the $j$-th coordinate and $0$ elsewhere. The expression
\begin{equation*} 
\sum_{n=1}^{d}\sum\limits_{j\in I_{n}}x_{i_{n}}\cdot e_{j} \in E_{1}\times\cdots\times E_{m}
\end{equation*}
will be decisive throughout this matter. The next example clarifies the notation. 
\begin{example}
If
\[
m=5,\ d=3,\ \mathcal{I}_3=\{I_{1},I_{2},I_{3}\}, \ \text{ with } I_{1}=\{1,3\},\ I_{2}=\{2,4\}, \text{ and } I_{3}=\{5\},
\]
we have
\begin{align*}
\sum_{n=1}^{3}\sum_{j\in I_{n}}x_{i_{n}}\cdot e_{j}  &  =\sum_{j\in I_{1}}x_{i_{1}}\cdot e_{j}+\sum_{j\in I_{2}}x_{i_{2}}\cdot e_{j}+\sum_{j\in I_{3}}x_{i_{3}}\cdot e_{j}\\
&  =x_{i_{1}}\cdot e_{1}+x_{i_{1}}\cdot e_{3}+\sum_{j\in I_{2}}x_{i_{2}}\cdot e_{j}+\sum_{j\in I_{3}}x_{i_{3}}\cdot e_{j}\\
&=(x_{i_{1}},0,0,0,0)+(0,0,x_{i_{1}},0,0)+\sum_{j\in I_{2}}x_{i_{2}}\cdot e_{j}+\sum_{j\in I_{3}}x_{i_{3}}\cdot e_{j}\\
&=(x_{i_{1}},x_{i_{2}},x_{i_{1}},x_{i_{2}},x_{i_{3}}).
\end{align*}
\end{example}

Thus, as mentioned earlier, by considering the index set as a block type, the norm on the left side of \eqref{lambda-def} is more clearly computed. For instance, if $\Lambda = \mathcal{B}_{\mathcal{I}}$ is a block of $\mathcal{I}$-type, we have
\[
T x_{\mathbf{i}} = T \left(\sum_{n=1}^{d} \sum_{j\in I_{n}} x_{i_{n}}^{j} \cdot e_{j}\right),
\quad \text{ for } \mathbf{i} \in \mathcal{B}_{\mathcal{I}},
\]
and its $\ell_{\mathbf{r}}$-norm is written as
\begin{align*}
\left\| \left( T x_{\mathbf{i}} \right)_{\mathbf{i} \in \mathcal{B}_{\mathcal{I}}} \right\|_{\ell_{\mathbf{r}} \left( F \right)} 
&=\left( \sum_{i_{1}=1}^{\infty}\left( \cdots\left( \sum_{i_{d}=1}^{\infty}\left\| T \left(  \sum_{n=1}^{d}\sum_{j\in I_{n}} x_{i_{n}}^{j} \cdot e_{j}\right) \right\|_{F}^{r_{d}}\right)^{\frac{r_{d-1}}{r_{d}}}\cdots\right)^{\frac{r_{1}}{r_{2}}}\right)^{\frac{1}{r_{1}}}.
\end{align*}

We will simple write
\( \Pi^{\mathcal{B}_{\mathcal{I}}} := \Pi^{\Lambda} \) and \(\pi^{\mathcal{B}_{\mathcal{I}}} := \pi^{\Lambda}\). Note that by considering partitions $\mathcal{I}_{\mathrm{as}}:=\{\{1,\dots,m\}\}$ and $\mathcal{I}_{\mathrm{ms}}:=\{\{1\},\dots,\{m\}\}$, we recover the absolute and multiple summing classes.

As previously mentioned, the class \(\Pi^\Lambda\) (including its specific block case) possesses natural properties analogous to those of classical classes. To clarify, we introduce a condition to prevent cases where the class \(\Pi^{\mathcal{B}_{\mathcal{I}}}\) is trivial. Naturally, the parameters involved must satisfy a condition that integrates the requirements associated with absolutely and multiple summing classes. Thus argument is classical thus we omit the proof.

\begin{proposition}
Let $E_{1}, \dots, E_{m}, F$ be Banach spaces and let $m,d$ be positive integers with $1\leq d\leq m$, and $(\mathbf{r},\mathbf{p}) := (r_{1}, \ldots, r_{d}; p_{1}, \ldots, p_{m}) \in \lbrack 1,\infty)^{d+m}$. Let also $\mathcal{I}_d = \{I_{1}, \dots, I_{d}\}$ be a partition of $\{1,\ldots,m\}$ and $\mathcal{B}_{\mathcal{I}}$ a block of $\mathcal{I}$-type. If there exists $k \in \{1,\dots,d\}$ such that ${1}/{r_{k}}>\sum_{j\in I_{k}}{1}/{p_{j}}$, then
\[
\Pi_{\left( \mathbf{r}; \mathbf{p} \right)}^{\mathcal{B}_{\mathcal{I}}} (E_{1}, \dots, E_{m}; F) = \{0\}.
\]
\end{proposition}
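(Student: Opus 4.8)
The plan is to argue by contraposition: I will show that if $T\in\Pi_{(\mathbf{q};\mathbf{p})}^{\mathcal{I}}(E_{1},\dots,E_{m};F)$ is nonzero, then the $\Lambda_{\mathrm{b}}$-summing inequality \eqref{lambda_def} must fail for a suitable choice of test sequences. So pick $a_{j}\in E_{j}$ with $b:=T(a_{1},\dots,a_{m})\neq 0$ (in particular every $a_{j}\neq 0$), and fix the index $k\in\{1,\dots,d\}$ for which $1/q_{k}>\sum_{j\in I_{k}}1/p_{j}$. For a parameter $N\in\mathbb{N}$, define test sequences $x^{j}\in\ell_{p_{j}}^{w}(E_{j})$ by $x^{j}:=\sum_{i=1}^{N}a_{j}\cdot e_{i}$ when $j\in I_{k}$, and $x^{j}:=a_{j}\cdot e_{1}$ when $j\notin I_{k}$. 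These are finite sequences, hence weakly $p_{j}$-summable, and since $\sup_{\|\varphi\|\le 1}|\varphi(a_{j})|=\|a_{j}\|$ we get $\|x^{j}\|_{w,p_{j}}=\|a_{j}\|$ for $j\notin I_{k}$ and $\|x^{j}\|_{w,p_{j}}=N^{1/p_{j}}\|a_{j}\|$ for $j\in I_{k}$.

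Next I would evaluate the left-hand side of \eqref{lambda_def}, rewritten through the block formula displayed just before the proposition. For a multi-index $(i_{1},\dots,i_{d})$, the $j$-th coordinate of the argument of $T$ is $x^{(j)}_{i_{n}}$ for $j\in I_{n}$; by multilinearity the corresponding term vanishes unless $i_{n}=1$ for every $n\neq k$, and in that case the argument equals $(a_{1},\dots,a_{m})$ precisely when moreover $1\le i_{k}\le N$. Thus exactly the $N$ multi-indices with $i_{n}=1$ $(n\neq k)$ and $1\le i_{k}\le N$ contribute, each with value $\|b\|$. Tracking the nested sums: the $d-1$ summations over $i_{n}$, $n\neq k$, are trivial (a single term), while the summation over $i_{k}$ produces a factor $N$; pushing this factor through the successive normalizing exponents $q_{1}/q_{2},\dots,q_{d-1}/q_{d}$ and the final $1/q_{1}$ yields $N^{1/q_{k}}$. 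Hence the left-hand side equals $N^{1/q_{k}}\,\|b\|$.

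The right-hand side of \eqref{lambda_def} is $C\prod_{j=1}^{m}\|x^{j}\|_{w,p_{j}}=C\big(\prod_{j=1}^{m}\|a_{j}\|\big)N^{\sum_{j\in I_{k}}1/p_{j}}$, so \eqref{lambda_def} would force
\[
N^{1/q_{k}}\,\|b\|\ \le\ C\Big(\prod_{j=1}^{m}\|a_{j}\|\Big)\,N^{\sum_{j\in I_{k}}1/p_{j}}\qquad\text{for all }N\in\mathbb{N}.
\]
Since $1/q_{k}>\sum_{j\in I_{k}}1/p_{j}$, letting $N\to\infty$ forces $\|b\|=0$, a contradiction. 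Therefore $T(a_{1},\dots,a_{m})=0$ for all $a_{1},\dots,a_{m}$, i.e.\ $T=0$, which is the claim.

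I expect the only delicate point to be the bookkeeping in the second step: checking that collapsing the $d-1$ trivial summations and then propagating the single factor $N$ through the chain of exponents $q_{1}/q_{2},\dots,q_{d-1}/q_{d}$ together with the outer $1/q_{1}$ really gives $N^{1/q_{k}}$, uniformly in the position of $k$ among $1,\dots,d$. This is a short induction on the nesting depth (the factor $N$ is raised to $q_{k-1}/q_{k}$, then $q_{k-2}/q_{k}$, and so on up to $q_{1}/q_{k}$, then finally to $1/q_{k}$; the inner summations with $n>k$ only affect the scalar $\|b\|$), after which the remaining estimates are immediate.
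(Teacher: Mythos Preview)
The paper states this proposition without proof (it is presented as a basic sanity check on the exponents, immediately followed by remarks about the norm), so there is no argument in the paper to compare against. Your proof is correct and is exactly the expected one: feed in $N$ repetitions of a fixed nonzero evaluation along the $k$-th block direction and a single copy along the others, then compare the resulting growth rates $N^{1/q_k}$ on the left against $N^{\sum_{j\in I_k}1/p_j}$ on the right. The bookkeeping you flag---that the $d-1$ trivial sums collapse to single terms and the remaining factor $N$ telescopes through the chain of exponents to $N^{1/q_k}$---is routine and your description of it is accurate. One small stylistic point: you announce contraposition but then phrase the conclusion as a contradiction followed by ``therefore $T=0$''; it reads more cleanly as a direct argument (take $T$ in the class, fix arbitrary $a_1,\dots,a_m$, and deduce $T(a_1,\dots,a_m)=0$), but the mathematics is the same either way.
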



For a more in-depth discussion on \(\Lambda\)-summing operators, we refer the reader to \cite{aacnnpr,bpr2020,popa-lma}. For a comprehensive study of the block scenario, we recommend the excellent works \cite{botelho-jmaa20,botelho-tmna25}.

\section{Inclusion theorem for block summing classes} \label{sec3}

For linear operators the classical inclusion theorem it is well known: let $s\geq r,\,q\geq p$ be such that
\(
\frac{1}{p} -\frac{1}{r}\leq\frac{1}{q}-\frac{1}{s},
\)
then every absolutely $(r;p)$-summing linear operator is absolutely $(s;q)$-summing. The linear case has the following extension for absolutely multilinear operators (see \cite{matos}):

\begin{theorem}
\label{matos_inclusao} Let $m$ be a positive integer, $1\leq r\leq s<\infty$
and $\mathbf{p},\mathbf{q}\in\lbrack1,\infty)^{m}$ be such that $q_{k}\geq
p_{k}$, for $k=1,\dots,m$. Then
\[
\Pi_{(r;\mathbf{p})}^{\mathrm{as}}\left(  E_{1},\dots,E_{m};F\right)
\subset\Pi_{(s;\mathbf{q})}^{\mathrm{as}}\left(  E_{1},\dots,E_{m};F\right)
,
\]
for any Banach spaces $E_{1},\dots,E_{m},F$, with
\[
\frac{1}{s}-\left\vert \frac{1}{\mathbf{q}}\right\vert =\frac{1}{r}-\left\vert
\frac{1}{\mathbf{p}}\right\vert ,
\]
and the inclusion operator has norm $1$.
\end{theorem}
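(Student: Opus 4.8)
\emph{Proof strategy.} The plan is to obtain the inclusion straight from the definition, the only genuinely multilinear ingredient being the trivial fact that scalars slide through the entries of $T$. First I would fix the auxiliary exponents: for $k=1,\dots,m$ put $\frac1{t_k}:=\frac1{p_k}-\frac1{q_k}\in[0,1]$ and set $\frac1u:=\sum_{k=1}^m\frac1{t_k}=\left|\frac1{\mathbf{p}}\right|-\left|\frac1{\mathbf{q}}\right|$, so that the hypothesis $\frac1s-\left|\frac1{\mathbf{q}}\right|=\frac1r-\left|\frac1{\mathbf{p}}\right|$ becomes the identity $\frac1u=\frac1r-\frac1s$. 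If $u=\infty$ — equivalently $q_k=p_k$ for every $k$, equivalently $r=s$ — the inclusion is the identity and there is nothing to prove, so I assume $r<s<\infty$; this forces $u>r$, hence $(u/r)^*<\infty$, and all the Hölder and $\ell_p$-duality manipulations below are legitimate (this is where the assumption $s<\infty$ enters).

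\emph{Step 1: sliding in multipliers.} Let $T\in\Pi^{\mathrm{as}}_{(r;\mathbf{p})}(E_1,\dots,E_m;F)$, write $\pi:=\pi^{\mathrm{as}}_{(r;\mathbf{p})}(T)$, fix $x^k=(x^k_i)_i\in\ell^w_{q_k}(E_k)$, and let $\lambda^k=(\lambda^k_i)_i\in\ell_{t_k}$ be arbitrary scalar sequences. Since $\frac1{p_k}=\frac1{t_k}+\frac1{q_k}$, Hölder's inequality gives $(\lambda^k_ix^k_i)_i\in\ell^w_{p_k}(E_k)$ with $\|(\lambda^k_ix^k_i)_i\|_{w,p_k}\le\|\lambda^k\|_{t_k}\|x^k\|_{w,q_k}$. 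Applying the defining inequality of $\Pi^{\mathrm{as}}_{(r;\mathbf{p})}$ to $T$ at these vectors and using $T(\lambda^1_ix^1_i,\dots,\lambda^m_ix^m_i)=\bigl(\prod_k\lambda^k_i\bigr)T(x^1_i,\dots,x^m_i)$, I obtain
\[
\Bigl(\sum_i\bigl|\prod_{k=1}^m\lambda^k_i\bigr|^r\,\|T(x^1_i,\dots,x^m_i)\|^r\Bigr)^{1/r}\le\pi\prod_{k=1}^m\|\lambda^k\|_{t_k}\,\|x^k\|_{w,q_k}.
\]

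\emph{Step 2: collapsing the multipliers and duality.} Every $\mu=(\mu_i)_i\in\ell_u$ factors as a pointwise product $\mu_i=\prod_k\lambda^k_i$ with $\lambda^k\in\ell_{t_k}$ and $\prod_k\|\lambda^k\|_{t_k}=\|\mu\|_u$: take $|\lambda^k_i|:=|\mu_i|^{u/t_k}$ (so $\prod_k|\lambda^k_i|=|\mu_i|$ because $\sum_k u/t_k=1$) and let $\lambda^1$ carry the phase of $\mu$; then $\|\lambda^k\|_{t_k}=\|\mu\|_u^{u/t_k}$. Inserting this into the display of Step 1 and raising to the power $r$ gives, for every $\mu\in\ell_u$,
\[
\sum_i|\mu_i|^r\,\|T(x^1_i,\dots,x^m_i)\|^r\le\pi^r\,\|\mu\|_u^r\prod_{k=1}^m\|x^k\|_{w,q_k}^r.
\]
As $\mu$ runs through $\ell_u$, the sequence $(|\mu_i|^r)_i$ runs through every nonnegative element of $\ell_{u/r}$, with $\|(|\mu_i|^r)_i\|_{u/r}=\|\mu\|_u^r$; so the displayed estimate says precisely that $\bigl(\|T(x^1_i,\dots,x^m_i)\|^r\bigr)_i$ acts as a bounded functional on $\ell_{u/r}$ of norm $\le\pi^r\prod_k\|x^k\|_{w,q_k}^r$, and by $(\ell_{u/r})^*=\ell_{(u/r)^*}$ it lies in $\ell_{(u/r)^*}$ with that same norm bound. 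Since $r\,(u/r)^*=s$ — which is just the identity $\frac1s=\frac1r-\frac1u$ rewritten — this reads
\[
\Bigl(\sum_i\|T(x^1_i,\dots,x^m_i)\|^s\Bigr)^{1/s}\le\pi^{\mathrm{as}}_{(r;\mathbf{p})}(T)\prod_{k=1}^m\|x^k\|_{w,q_k},
\]
so $T\in\Pi^{\mathrm{as}}_{(s;\mathbf{q})}(E_1,\dots,E_m;F)$ with $\pi^{\mathrm{as}}_{(s;\mathbf{q})}(T)\le\pi^{\mathrm{as}}_{(r;\mathbf{p})}(T)$; hence the inclusion operator has norm $\le1$. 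For the reverse inequality (when the classes are nontrivial, i.e. $\frac1r\le\left|\frac1{\mathbf{p}}\right|$) I would test the inclusion on $T(x^1,\dots,x^m):=\varphi_1(x^1)\cdots\varphi_m(x^m)\,y$ with norm-one $\varphi_k\in E_k^*$ and $y\in F$, checking $\pi^{\mathrm{as}}_{(r;\mathbf{p})}(T)=\pi^{\mathrm{as}}_{(s;\mathbf{q})}(T)=1$ by generalized Hölder together with $\ell_\rho\subseteq\ell_r$ (where $\frac1\rho=\left|\frac1{\mathbf{p}}\right|\ge\frac1r$) for the upper bounds and one-term test sequences for the lower bounds.

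\emph{Main obstacle.} I do not expect a serious difficulty here: the whole argument is carried by the multiplier trick of Steps 1--2, which is the multilinear analogue of the classical proof of the linear inclusion theorem. The care that is needed is purely bookkeeping — keeping the exponent identities $\frac1{t_k}=\frac1{p_k}-\frac1{q_k}$, $\frac1u=\left|\frac1{\mathbf{p}}\right|-\left|\frac1{\mathbf{q}}\right|=\frac1r-\frac1s$ and $r(u/r)^*=s$ mutually consistent, verifying that all exponents produced stay in $[1,\infty]$ with $u,u/r,(u/r)^*$ finite so the Hölder and $\ell_{u/r}$--$\ell_{(u/r)^*}$ duality steps genuinely apply, and separating off the degenerate case $p_k=q_k$. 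A more structural alternative — invoking a multilinear Pietsch-type domination theorem for $\Pi^{\mathrm{as}}_{(r;\mathbf{p})}$ and then adjusting exponents by Hölder against the dominating probability measures — would also work, but the direct route above is entirely self-contained.
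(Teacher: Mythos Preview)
The paper does not actually prove this theorem: it is quoted from Matos \cite{matos} as a known result and then used as a tool in the proof of Theorem~\ref{inclusion_blocks}. So there is no ``paper's own proof'' to compare against.

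That said, your argument is correct and is essentially the standard one for this type of inclusion. The multiplier trick in Step~1 (inserting scalars $\lambda^k_i$ and using $\frac1{p_k}=\frac1{t_k}+\frac1{q_k}$ together with H\"older to pass from $\ell^w_{q_k}$ to $\ell^w_{p_k}$) followed by the collapse-and-duality step (factoring $\mu\in\ell_u$ as $\prod_k\lambda^k$, then reading the resulting inequality as a pairing against $\ell_{u/r}$ and invoking $(\ell_{u/r})^*=\ell_{(u/r)^*}$ with $r(u/r)^*=s$) is exactly the mechanism behind the classical linear inclusion theorem, and your bookkeeping of the exponents is accurate. Your separation of the degenerate case $u=\infty$ and the check that $u/r\in(1,\infty)$ so the duality is legitimate are the right sanity checks. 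The only cosmetic remark is that the ``norm~$1$'' assertion, as stated in the paper, tacitly assumes the domain class is nontrivial; you already flagged this by restricting the lower-bound argument to the case $\frac1r\le\left|\frac1{\mathbf{p}}\right|$, which is the correct hypothesis for the rank-one test operator to lie in $\Pi^{\mathrm{as}}_{(r;\mathbf{p})}$ with norm exactly~$1$.
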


Inclusion theorems become more intricate when dealing with multiple summing operators (see, e.g., \cite{bjp,dav,pe33}). Recently, this topic has been explored by various authors using different techniques (see \cite[Theorem 3]{AR}, \cite[Theorem 1.2]{bbbb}, \cite[Theorem 3]{paulino}, and \cite[Proposition 3.3]{reg}). We now present our main result, which generalizes all the aforementioned ones. Before proceeding, we introduce some notation. The conjugate of \( p \in (1,\infty) \) is denoted by \( p^{\ast} \), where \( \frac{1}{p}+\frac{1}{p^{\ast}}=1 \), with the convention that 1 and \( \infty \) are conjugates of each other. For \( A \subset \{1,\ldots,m\} \) and \( p_1,\dots,p_m \in [1,\infty] \), we define  
\[
\left| \frac{1}{\mathbf{p}} \right|_{j\in A} := \sum_{j\in A}\frac{1}{p_j}.
\]  
For \( 1\leq k\leq m \), we set \( |1/\mathbf{p}|_{j\geq k}:=|1/\mathbf{p}|_{j\in \{k,\ldots,m\}} \), and we write \( |1/\mathbf{p}| \) as a shorthand for \( |1/\mathbf{p}|_{j\geq 1} \).


\begin{theorem} \label{inclusion_blocks}
Let $1\leq d\leq m$ be positive integers and $\mathcal{I} = \{I_{1}, \dots, I_{d}\}$ be a partition of $\{1,\dots,m\}$ and and $\mathcal{B}_{\mathcal{I}}$ a block of $\mathcal{I}$-type. Let also $r\geq1,\,\mathbf{p},\mathbf{q}\in\lbrack1,\infty)^{m}$ and $r\leq s_{d}\leq\dots\leq s_{2}\leq s_1$ such that
\[
\frac{1}{s_{k}}-\left\vert \frac{1}{\mathbf{q}}\right\vert _{j\in \bigcup_{i=k}^{d}I_{i}}
=\frac{1}{r}-\left\vert \frac{1}{\mathbf{p}}\right\vert_{j\in\bigcup_{i=k}^{d}I_{i}},\ k=1,\ldots,d.
\]
If some of the following conditions holds,
\begin{itemize}
\item[(A)] $q_{j}\geq p_{j},\ j=1,\dots,m$, and
\[
\frac{1}{r} - \left\vert \frac{1}{\mathbf{p}}\right\vert +\left\vert \frac{1}{\mathbf{q}}\right\vert >0;
\]
\item[(B)] $q_{1}>p_{1},\ q_{j}\geq p_{j},\ j=2,\dots,m$, and
\[
\frac{1}{r}-\left\vert \frac{1}{\mathbf{p}}\right\vert +\left\vert \frac{1}{\mathbf{q}}\right\vert =0.
\]
\end{itemize}
then 
\[
\Pi_{\left(r; \mathbf{p} \right)}^{\mathcal{B}_{\mathcal{I}}} \left(E_{1}, \dots, E_{m}; F\right)
\subset
\Pi_{\left(\mathbf{s}; \mathbf{q}\right)}^{\mathcal{B}_{\mathcal{I}}} \left( E_{1}, \dots, E_{m}; F\right)
\]
for any Banach spaces $E_{1},\dots,E_{m},F$. Moreover, the inclusion operator has norm $1$.
\end{theorem}

\begin{proof}
%
Let us suppose that condition (A) holds. We proceed by induction on $d$. The bilinear case is a straightforward application of Theorems \ref{matos_inclusao} and \cite[Theorem 3]{AR}.
	Let us suppose the result is true for all for $d-1$. Let $T\in\Pi_{\left(  r;\mathbf{p}
		\right)  }^{\mathcal{B}_{\mathcal{I}}}\left(  E_{1},\dots,E_{m};F\right)  $, i.e.,
	\begin{align*}
	&\left(  \sum_{j_{1}=1}^{n_{1}}\left(  \ldots\left(  \sum_{j_{d}=1}^{n_{d}}\left\Vert T\left(  \sum_{k=1}^{d}\sum_{j\in J_{k}}x_{j_{k}}^{j}\cdot e_{j}\right)\right\Vert ^{r}\right)  ^{\frac{r}{r}}\dots\right)  ^{\frac{r}{r}}\right)^{\frac{1}{r}} \\
	& =\left(  \sum_{j_{1},\dots,j_{d}=1}^{\infty}\left\Vert T\left(  \sum_{k=1}^{d}\sum_{j\in J_{k}}x_{j_{k}}^{j}\cdot e_{j}\right)  \right\Vert^{r}\right)  ^{\frac{1}{r}}\leq C\cdot\prod_{k=1}^{m}\left\Vert x^{k}\right\Vert _{w,p_{k}},
	\end{align*}
	for all $x^{k}\in\ell_{p_{k}}^{w}(E_{k})$. Without loss of
	generality, suppose that $I_{1}=\{1,\dots,l\}$. Thus, $\mathcal{J}:=\{I_{2},\dots,I_{d}\}$ is a partition of $\{l+1,\dots,m\}$. Fixed
	$x^{k}\in\ell_{p_{k}}^{w}(E_{k}),k=1,\ldots,l$, let us define $w:E_{l+1}\times
	\cdots\times E_{m}\rightarrow\ell_{r}(F)$ given by
	\[
	w(x_{l+1},\ldots,x_{m}):=\left(  T(x_{j}^{1},\dots,x_{j}^{l},x_{l+1}%
	,\ldots,x_{m})\right)  _{j\in\mathbb{N}}.
	\]
Observe that $w$ belongs to $\Pi_{(r;p_{l+1},\dots,p_{m})}^{\mathcal{B}_{\mathcal{J}}}\left(
E_{l+1},\dots,E_{m};\ell_{r}(F)\right)$. Consequently, by the induction hypothesis, \(\ell_p\) norm inclusions, and Minkowski's inequality,
	\begin{align*}
	&  \left(  \sum_{j_{1}=1}^{\infty}\left(  \sum_{j_{2}=1}^{\infty}\left(
	\ldots\left(  \sum_{j_{d}=1}^{\infty}\left\Vert T\left(  \sum_{k=1}^{d}%
	\sum_{j\in I_{k}}x_{j_{k}}^{j}\cdot e_{j}\right)  \right\Vert ^{s_{d}}\right)
	^{\frac{s_{d-1}}{s_{d}}}\ldots\right)  ^{\frac{s_{2}}{s_{3}}}\right)
	^{\frac{s_{2}}{s_{2}}}\right)  ^{\frac{1}{s_{2}}}\\
	&  \leq\left(  \sum_{j_{2}=1}^{\infty}\left(  \ldots\left(  \sum_{j_{d}%
		=1}^{\infty}\left(  \sum_{j_{1}=1}^{\infty}\left\Vert T\left(  \sum_{j\in
		I_{1}}x_{j_{1}}^{j}\cdot e_{j}+\sum_{k=2}^{d}\sum_{j\in I_{k}}x_{j_{k}}^{j}%
	\cdot e_{j}\right)  \right\Vert ^{r}\right)  ^{\frac{s_{d}}{r}}\right)
	^{\frac{s_{d-1}}{s_{d}}}\ldots\right)  ^{\frac{s_{2}}{s_{3}}}\right)
	^{\frac{1}{s_{2}}}\\
	&  =\left(  \sum_{j_{2}=1}^{\infty}\left(  \ldots\left(  \sum_{j_{d}%
		=1}^{\infty}\left\Vert w\left(  \sum_{k=2}^{d}\sum_{j\in I_{k}}x_{j_{k}}%
	^{j}\cdot e_{j}\right)  \right\Vert _{r}^{s_{d}}\right)  ^{\frac{s_{d-1}}{s_{d}}%
	}\ldots\right)  ^{\frac{s_{2}}{s_{3}}}\right)  ^{\frac{1}{s_{2}}}\\
	&  \leq C\cdot\prod_{k=1}^{l}\left\Vert x^{k}\right\Vert _{w,p_{k}}\cdot
	\prod_{k=l+1}^{m}\left\Vert x^{k}\right\Vert _{w,q_{k}},
	\end{align*}
	with $r\leq s_{d}\leq\dots\leq s_{2}$ and
	\[
	\frac{1}{s_{k}}=\frac{1}{r}-\left\vert \frac{1}{\mathbf{p}}\right\vert
	_{j\in\bigcup_{i=k}^{d}I_{i}}+\left\vert \frac{1}{\mathbf{q}}\right\vert
	_{j\in\bigcup_{i=k}^{d}I_{i}},
	\quad \text{ for each } \quad k=2,\ldots,d.
	\]
	Now fixed $x^{k}\in\ell_{p_{k}}^{w} (E_{k}),k=l+1,\ldots,m$, let us define, for all $(x_{1},\dots,x_{l})\in
	E_{1}\times\cdots\times E_{l}$,
	\[
	\psi(x_{1},\ldots,x_{l}):=\left(  T\left(  \sum_{j\in I_{1}}x_{j}\cdot e_{j}%
	+\sum_{k=2}^{d}\sum_{j\in I_{k}}x_{j_{k}}^{j}\cdot e_{j}\right)  \right)
	_{j_{2},\dots,j_{d}\in\mathbb{N}}.
	\]
	Notice that
	\[
	\left(  \sum_{j_{1}=1}^{\infty}\left\Vert \psi\left(  x_{j_{1}}^{1}%
	,\ldots,x_{j_{1}}^{l}\right)  \right\Vert _{\ell_{(s_{2},\dots,s_{d})}}%
	^{s_{2}}\right)  ^{\frac{1}{s_{2}}}\leq C_{1}\cdot\prod_{k=1}^{l}\left\Vert
	x^{k}\right\Vert _{w,p_{k}},
	\]
	where $C_{1}=C\cdot\prod_{k=l+1}^{m}\left\Vert x^{k}\right\Vert _{w,q_{k}}$,
	i.e., $\psi\in\Pi_{(s_{2};p_{1},\dots,p_{l})}^{\mathrm{as}}\left(  E_{1}%
	,\dots,E_{l};\ell_{s_{2},\dots,s_{d}}(F)\right)  $. From Theorem
	\ref{matos_inclusao} we conclude that $\psi\in\Pi_{(s_{1};q_{1},\dots,q_{l}%
		)}^{as}\left(  E_{1},\dots,E_{l};\ell_{s_{2},\dots,s_{d}}(F)\right)  $, with
	\[
	\frac{1}{s_{1}}=\frac{1}{s_{2}}-\sum_{j\in I_{1}}\frac{1}{p_{j}}+\sum_{j\in
		I_{1}}\frac{1}{q_{j}}=\frac{1}{r}-\left\vert \frac{1}{\mathbf{p}}\right\vert
	_{j\in\bigcup_{i=1}^{d}I_{i}}+\left\vert \frac{1}{\mathbf{q}}\right\vert
	_{j\in\bigcup_{i=1}^{d}I_{i}}.
	\]

\bigskip

Now we deal with the hypothesis (B). Let $T\in\Pi_{\left(  r;\mathbf{p}\right) }^{\mathcal{B}_{\mathcal{I}}} \left(  E_{1},\dots,E_{m};F\right)  $, i.e.,
	\begin{align*}
	& \left(  \sum_{j_{1}=1}^{n_{1}}\left(  \ldots\left(  \sum_{j_{d}=1}^{n_{d}}\left\Vert T\left(  \sum_{k=1}^{d}\sum_{j\in J_{k}}x_{j_{k}}^{j}\cdot e_{j}\right)\right\Vert ^{r}\right)  ^{\frac{r}{r}}\dots\right)  ^{\frac{r}{r}}\right)^{\frac{1}{r}} \\
	&  = \left(  \sum_{j_{1},\dots,j_{d}=1}^{\infty}\left\Vert T\left(  \sum_{k=1}^{d}\sum_{j\in J_{k}}x_{j_{k}}^{j}\cdot e_{j}\right)  \right\Vert^{r}\right)  ^{\frac{1}{r}} \leq C\cdot\prod_{k=1}^{m}\left\Vert x^{k}\right\Vert _{w,p_{k}},
	\end{align*}
	for all sequences $x^{k}\in\ell_{p_{k}}^{w}(E_{k})$. From $q_{1}>p_{1}$ it follows that
	\[
	\frac{1}{r}-\left\vert \frac{1}{\mathbf{p}}\right\vert _{\geq2}+\left\vert \frac{1}{\mathbf{q}}\right\vert _{\geq2}>0.
	\]
For the sake of clarity, we suppose that $I_{1}=\{1\}$. Then $\mathcal{K}:=\{I_{2},\dots,I_{d}\}$ is a partition of $\{2,\dots,m\}$. Fixed $x^{1}\in\ell_{p_{1}}^{w}(E_{1})$,
define $\beta:E_{2}\times\cdots\times E_{m}\rightarrow\ell_{r}(F)$ given by
\[
\beta(x_{2},\ldots,x_{m}):=\left(  T(x_{j}^{1},x_{2},\ldots,x_{m})\right)
_{j\in\mathbb{N}}.
\]
Thus, \(\beta \in \Pi_{(r;p_{2},\dots,p_{m})}^{\mathcal{B}_{\mathcal{K}}} (E_{2},\dots,E_{m};\ell_{r}(F))\). We apply the result established in case (A), proceeding accordingly:
\begin{align*}
	&  \left(  \sum_{j_{1}=1}^{\infty}\left(  \sum_{j_{2}=1}^{\infty}\left(
	\ldots\left(  \sum_{j_{d}=1}^{\infty}\left\Vert T\left(  \sum_{k=1}^{d}%
	\sum_{j\in I_{k}}x_{j_{k}}^{j}\cdot e_{j}\right)  \right\Vert ^{s_{d}}\right)
	^{\frac{s_{d-1}}{s_{d}}}\ldots\right)  ^{\frac{s_{2}}{s_{3}}}\right)
	^{\frac{s_{2}}{s_{2}}}\right)  ^{\frac{1}{s_{2}}}\\
	&  \leq\left(  \sum_{j_{2}=1}^{\infty}\left(  \ldots\left(  \sum_{j_{d}%
		=1}^{\infty}\left(  \sum_{j_{1}=1}^{\infty}\left\Vert T\left(  x_{j_{1}}%
	^{1}\cdot e_{1}+\sum_{k=2}^{d}\sum_{j\in I_{k}}x_{j_{k}}^{j}\cdot e_{j}\right)
	\right\Vert ^{r}\right)  ^{\frac{s_{d}}{r}}\right)  ^{\frac{s_{d-1}}{s_{d}}%
	}\ldots\right)  ^{\frac{s_{2}}{s_{3}}}\right)  ^{\frac{1}{s_{2}}}\\
	&  =\left(  \sum_{j_{2}=1}^{\infty}\left(  \ldots\left(  \sum_{j_{d}%
		=1}^{\infty}\left\Vert \beta\left(  \sum_{k=2}^{d}\sum_{j\in I_{k}}x_{j_{k}%
	}^{j}\cdot e_{j}\right)  \right\Vert _{r}^{s_{d}}\right)  ^{\frac{s_{d-1}}{s_{d}}%
	}\ldots\right)  ^{\frac{s_{2}}{s_{3}}}\right)  ^{\frac{1}{s_{2}}}\\
	&  \leq C\cdot\left\Vert x^{1}\right\Vert _{w,p_{1}}\cdot\prod_{k=2}%
	^{m}\left\Vert x^{k}\right\Vert _{w,q_{k}},
	\end{align*}
	with $r\leq s_{d}\leq\dots\leq s_{2}$ and
	\[
	\frac{1}{s_{k}}=\frac{1}{r}-\left\vert \frac{1}{\mathbf{p}}\right\vert
	_{j\in\bigcup_{i=k}^{d}I_{i}}+\left\vert \frac{1}{\mathbf{q}}\right\vert
	_{j\in\bigcup_{i=k}^{d}I_{i}},
	\quad \text{ for each } \quad k=2,\ldots,d.
	\]
	Now we fix $x^{k}\in\ell_{p_{k}}^{w} (E_{k}),k=2,\ldots,m$, and define, for all $x\in E_{1}$,
	\[
	\xi(x):=\left(  T\left(  x\cdot  e_{1}+\sum_{k=2}^{d}\sum_{j\in I_{k}%
	}x_{j_{k}}^{j}\cdot e_{j}\right)  \right)  _{j_{2},\dots,j_{d}\in\mathbb{N}}.
	\]
Then
\[
\left(  \sum_{j_{1}=1}^{\infty}\left\Vert \xi\left(  x_{j_{1}}^{1}\right)
\right\Vert _{\ell_{(s_{2},\dots,s_{d})}}^{s_{2}}\right)  ^{\frac{1}{s_{2}}%
}\leq C_{1}\cdot\left\Vert x^{1}\right\Vert _{w,p_{1}},
\]
where $C_{1}=C\cdot\prod_{k=2}^{m}\left\Vert x^{k}\right\Vert _{w,q_{k}}$, i.e., $\xi\in\Pi_{(s_{2};p_{1})}\left(  E_{1};\ell_{s_{2},\dots,s_{d}
}(F)\right)$. Applying the Classical Inclusion Theorem, we obtain \(\xi \in \Pi_{(s_{1};q_{1})} (E_{1};\ell_{s_{2},\dots,s_{d}}(F))\), where
\[
\frac{1}{s_{1}}
=\frac{1}{s_{2}} - \frac{1}{p_{1}} + \frac{1}{q_{1}}
=\frac{1} {r}-\left\vert \frac{1}{\mathbf{p}}\right\vert_{j \in \bigcup_{i=1}^{d}I_{i}} + \left\vert \frac{1}{\mathbf{q}}\right\vert_{j \in \bigcup_{i=1}^{d}I_{i}}.
\]
This completes the proof.
\end{proof}

\section{Applications} \label{sec4}

In this section, \( e_{i}^{n} \) denotes the \( n \)-tuple \( (e_{i},\dots,e_{i}) \), where \( e_{i} \) is the canonical vector of the sequence space \( c_{0} \). We define \( X_{p} = \ell_{p} \) for \( 1 \leq p < \infty \) and \( X_{\infty} = c_{0} \).

\subsection{Hardy--Littlewood block variants inequalities}

In 1934, G. Hardy and E. Littlewood \cite{hardy} extended Littlewood's $4/3$ inequality to bilinear forms on $\ell_{p} \times \ell_{q}$. In 1981, T. Praciano-Pereira \cite{pra} generalized the Hardy–Littlewood inequalities to $m$-linear forms on $\ell_{p_{1}} \times \cdots \times \ell_{p_{m}}$ for $0 \leq \frac{1}{p_{1}} + \cdots + \frac{1}{p_{m}} \leq \frac{1}{2}$. Later, V. Dimant and P. Sevilla-Peris \cite{dimant} further extended these inequalities to the case $\frac{1}{2} \leq \frac{1}{p_{1}} + \cdots + \frac{1}{p_{m}} < 1$. The following result combines the Hardy–Littlewood and Dimant–Sevilla-Peris inequalities.

\begin{theorem}[Hardy--Littlewood/Dimant--Sevilla-Peris \cite{dimant,hardy}]
Let $\mathbf{p}\in[1,\infty)^{m}$ such that $\frac{1}{2}\leq\left\vert\frac{1}{\mathbf{p}}\right\vert <1$. Then, for every continuous $m$--linear forms $T:X_{p_{1}} \times\cdots\times X_{p_{m}} \to\mathbb{K}$, there exists a constant $C_{m,\mathbf{p}}^\mathbb{K}\geq 1$ such that
\[
\left(\sum_{i_{1},\dots,i_{m}=1}^{\infty}\left\vert T\left(  e_{i_{1}},\dots,e_{i_{m}}\right)  \right\vert ^{\frac{1}{1-\left\vert\frac{1}{\mathbf{p}}\right\vert}}\right)  ^{1-\left\vert \frac{1}{\mathbf{p}}\right\vert }\leq C_{m,\mathbf{p}}^{\mathbb{K}}\left\Vert T\right\Vert
\]
and the exponent is optimal.
\end{theorem}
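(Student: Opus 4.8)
Write $\rho:=\bigl(1-\left| \frac{1}{\mathbf{p}}\right|\bigr)^{-1}$ for the claimed exponent, so that $\tfrac{1}{\rho}=1-\left| \frac{1}{\mathbf{p}}\right|$. The optimality of $\rho$ is the easy half. One tests the inequality on the \emph{diagonal} $m$--linear form $T_{N}(x^{1},\dots,x^{m}):=\sum_{i=1}^{N}x^{1}_{i}\cdots x^{m}_{i}$ on the first $N$ coordinates of $X_{p_{1}}\times\cdots\times X_{p_{m}}$. Writing $1=\sum_{j}\tfrac{1}{r_{j}}$ with $1\leq r_{j}\leq p_{j}$ --- possible since $\left| \frac{1}{\mathbf{p}}\right|<1$ and $m\geq 1$ --- the generalized Hölder inequality together with $\left\|y\right\|_{r_{j}}\leq N^{1/r_{j}-1/p_{j}}\left\|y\right\|_{p_{j}}$ for $y\in\mathbb{K}^{N}$ gives $\left\|T_{N}\right\|\leq N^{1-\left| \frac{1}{\mathbf{p}}\right|}=N^{1/\rho}$, while testing $T_{N}$ on the normalized constant vectors gives $\left\|T_{N}\right\|\geq N^{1/\rho}$. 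Since $T_{N}(e_{i_{1}},\dots,e_{i_{m}})$ equals $1$ when $i_{1}=\cdots=i_{m}$ and $0$ otherwise, the left--hand side of the inequality for $T_{N}$ is exactly $N^{1/\rho}$; hence for $\rho'<\rho$ the quotient $N^{1/\rho'}\left\|T_{N}\right\|^{-1}$ tends to $\infty$, so $\rho$ cannot be lowered.

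For the upper estimate the plan is to reduce to forms on $c_{0}\times\cdots\times c_{0}$ and to invoke the anisotropic Bohnenblust--Hille inequality. After a routine truncation it suffices to bound $\bigl(\sum_{i_{1},\dots,i_{m}=1}^{N}\left|a_{\mathbf{i}}\right|^{\rho}\bigr)^{1/\rho}$ uniformly in $N$, where $a_{\mathbf{i}}:=T(e_{i_{1}},\dots,e_{i_{m}})$. For finitely supported scalar sequences $\lambda^{j}$ with $\left\|\lambda^{j}\right\|_{X_{p_{j}}}\leq 1$, the $m$--linear form $U(y^{1},\dots,y^{m}):=T\bigl(\sum_{i}\lambda^{1}_{i}y^{1}_{i}e_{i},\dots,\sum_{i}\lambda^{m}_{i}y^{m}_{i}e_{i}\bigr)$ satisfies $\left\|U\colon c_{0}\times\cdots\times c_{0}\to\mathbb{K}\right\|\leq\left\|T\right\|$, because $\left\|\sum_{i}\lambda^{j}_{i}y^{j}_{i}e_{i}\right\|_{X_{p_{j}}}\leq\left\|y^{j}\right\|_{\infty}\left\|\lambda^{j}\right\|_{X_{p_{j}}}$. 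Applying to $U$ the anisotropic Bohnenblust--Hille inequality, $\left\|(U(e_{\mathbf{i}}))_{\mathbf{i}}\right\|_{(\sigma_{1},\dots,\sigma_{m})}\leq C\left\|U\right\|$ for all $U$ and all $\sigma_{1},\dots,\sigma_{m}\in[1,2]$ with $\sum_{j}1/\sigma_{j}\leq(m+1)/2$, one obtains $\left\|(a_{\mathbf{i}}\lambda^{1}_{i_{1}}\cdots\lambda^{m}_{i_{m}})_{\mathbf{i}}\right\|_{(\sigma_{1},\dots,\sigma_{m})}\leq C\left\|T\right\|$. A short computation, using $\left| \frac{1}{\mathbf{p}}\right|\geq\tfrac{1}{2}$, shows that the choice $\tfrac{1}{\sigma_{j}}:=\max\bigl\{\tfrac{1}{2},\,1-\left| \frac{1}{\mathbf{p}}\right|+\tfrac{1}{p_{j}}\bigr\}$ is admissible and is arranged so that $\tfrac{1}{\sigma_{j}}-\tfrac{1}{p_{j}}\geq\tfrac{1}{\rho}$ for every $j$.

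It remains to pass from this weighted mixed--norm bound to the unweighted $\ell_{\rho}$ bound. Optimizing each $\lambda^{j}$ by the standard duality/Hölder recipe would replace $\sup_{\lambda}\left\|(a_{\mathbf{i}}\prod_{j}\lambda^{j}_{i_{j}})_{\mathbf{i}}\right\|_{(\sigma_{1},\dots,\sigma_{m})}$ by the mixed norm $\left\|(a_{\mathbf{i}})_{\mathbf{i}}\right\|_{(\widetilde{\sigma}_{1},\dots,\widetilde{\sigma}_{m})}$ with $\tfrac{1}{\widetilde{\sigma}_{j}}=\tfrac{1}{\sigma_{j}}-\tfrac{1}{p_{j}}$, and then, since $\widetilde{\sigma}_{j}\leq\rho$ for all $j$, the coordinatewise inclusion of mixed sequence spaces would give $\left\|(a_{\mathbf{i}})_{\mathbf{i}}\right\|_{\rho}=\left\|(a_{\mathbf{i}})_{\mathbf{i}}\right\|_{(\rho,\dots,\rho)}\leq\left\|(a_{\mathbf{i}})_{\mathbf{i}}\right\|_{(\widetilde{\sigma}_{1},\dots,\widetilde{\sigma}_{m})}\leq C\left\|T\right\|$, so that $C^{\mathbb{K}}_{m,\mathbf{p}}$ may be taken to be the relevant anisotropic Bohnenblust--Hille constant. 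I expect the genuine obstacle to be precisely this last step: when the $\sigma_{j}$ are not all equal the supremum over $\lambda$ does \emph{not} factor through the iterated sums, so one cannot optimize all the weights simultaneously. The standard way around this is to carry out the weight insertion and duality one variable at a time --- absorbing the innermost weight into the innermost variable, applying the linear Hölder/Inclusion Theorem, and iterating --- keeping control of the intermediate forms on $c_{0}\times\cdots\times c_{0}\times X_{p_{1}}\times\cdots\times X_{p_{k}}$; alternatively, the whole range $\tfrac{1}{2}\leq\left| \frac{1}{\mathbf{p}}\right|<1$ can be reached by complex interpolation between the endpoint $\left| \frac{1}{\mathbf{p}}\right|=\tfrac{1}{2}$ (exponent $2$, the multilinear Bohnenblust--Hille inequality) and the trivial endpoint $\left| \frac{1}{\mathbf{p}}\right|=1$ (exponent $\infty$, constant $1$).
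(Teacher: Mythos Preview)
The paper does not prove this theorem: it is quoted as a known result of Hardy--Littlewood and Dimant--Sevilla-Peris (with citations to \cite{dimant,hardy}) and serves only as background for the applications in Section~4. There is therefore no proof in the paper to compare your sketch against.

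On your sketch itself: the optimality argument via the diagonal form $T_{N}$ is correct and standard. For the upper bound, the reduction to $c_{0}$ through multiplicative weights and the anisotropic Bohnenblust--Hille inequality is a legitimate route, and your choice $1/\sigma_{j}=\max\{1/2,\,1/\rho+1/p_{j}\}$ does land in the admissible region $\sigma_{j}\in[1,2]$, $\sum_{j}1/\sigma_{j}\le(m+1)/2$ (one checks that $\sum_{j\in A}(1/\rho+1/p_{j})\le(|A|+1)/2$ for the set $A$ where the max is attained by the second term, using $|1/\mathbf{p}|\ge 1/2$). You are also right that the genuine obstacle is the last step: one cannot optimize all the weights $\lambda^{j}$ simultaneously inside a mixed $(\sigma_{1},\dots,\sigma_{m})$-norm. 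Of your two proposed remedies, the first --- peeling off one variable at a time and absorbing each weight by a linear inclusion/H\"older step --- is exactly the engine behind the inclusion theorems this paper studies (Theorems~\ref{matos_inclusao} and~\ref{inclusion_blocks}), and carried out carefully it reproduces the Dimant--Sevilla-Peris argument. The interpolation idea is shakier: while the endpoint $|1/\mathbf{p}|=1$ does give the trivial bound $\sup_{\mathbf{i}}|T(e_{\mathbf{i}})|\le\|T\|$, the space of $m$-linear forms on $X_{p_{1}}\times\cdots\times X_{p_{m}}$ is not an obvious interpolation scale in $\mathbf{p}$, so turning that into a proof requires additional structure you have not supplied.
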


In \cite[Theorem 2.4]{aacnnpr} the following version with blocks of the Hardy--Littlewood/Dimant--Sevilla-Peris inequalities was proven.

\begin{theorem}
	\label{hl_rewritten} Let $\mathbf{p} \in[1,\infty)^{m}$ and let $1\leq d \leq m,\ n_{1},\ldots,n_{d}$ positive integers such that $n_{1}+\cdots+n_{d} =m$. If $\frac{1}{2}\leq\left\vert\frac{1}{\mathbf{p}}\right\vert <1$, then, for every continuous $m$--linear forms $T:X_{p_{1}} \times\cdots\times X_{p_{m}} \to\mathbb{K}$,
	we have
	\[
	\left(  \sum_{i_{1},\dots,i_{d}=1}^{\infty}\left\vert T\left(  e_{i_{1}%
	}^{n_{1}},\dots,e_{i_{d}}^{n_{d}}\right)  \right\vert ^{\frac{1}{1-\left\vert
			\frac{1}{\mathbf{p}}\right\vert }}\right)  ^{1-\left\vert \frac{1}{\mathbf{p}%
		}\right\vert }\leq C_{k,\mathbf{p}}^{\mathbb{K}}\left\Vert T\right\Vert 
	\]
	for some constant $C_{k,\mathbf{p}}^\mathbb{K}\geq 1$.
	Moreover, the exponent is optimal.
\end{theorem}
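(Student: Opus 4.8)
The plan is to derive Theorem~\ref{hl_rewritten} from the non-block Hardy--Littlewood/Dimant--Sevilla-Peris inequality by recognizing the block-summability of the associated multilinear form and then invoking our Inclusion Theorem (Theorem~\ref{inclusion_blocks}) with the appropriate partition. First I would record the relevant coincidence result: every continuous $m$-linear form $T:X_{p_1}\times\cdots\times X_{p_m}\to\mathbb{K}$ is absolutely $\bigl(\tfrac{1}{1-|1/\mathbf{p}|};\mathbf{p}^{*}\bigr)$-summing, i.e. $T\in\Pi^{\mathrm{as}}_{(\rho;\mathbf{p}^{*})}$ with $\rho=\tfrac{1}{1-|1/\mathbf{p}|}$, where $\mathbf{p}^{*}=(p_1^{*},\dots,p_m^{*})$. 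This is exactly the content of the non-block Hardy--Littlewood/Dimant--Sevilla-Peris theorem once one checks that testing on the canonical vectors $e_{i}$ of $c_0$ (resp.\ $\ell_{p}$) reproduces the weak-$\ell_{p^{*}}$ summability condition, since $(e_i)_i$ is weakly $p^{*}$-summable in $X_p$ with $\|(e_i)\|_{w,p^{*}}=1$, and conversely the canonical vectors realize the supremum defining the summing norm for forms on these spaces.

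Next I would set up the block partition. Given $1\le d\le m$ and $n_1,\dots,n_d$ with $n_1+\cdots+n_d=m$, let $\mathcal I=\{I_1,\dots,I_d\}$ be the partition of $\{1,\dots,m\}$ into consecutive blocks, $I_1=\{1,\dots,n_1\}$, $I_2=\{n_1+1,\dots,n_1+n_2\}$, and so on; then $e_{i_k}^{n_k}$ in the statement is precisely $\sum_{j\in I_k} e_{i_k}\cdot e_j$ in the block notation of Section~2, so the left-hand side of Theorem~\ref{hl_rewritten} is $\|(Tx_{\mathbf i})_{\mathbf i\in\Lambda_{\mathrm b}}\|_{\ell_{\mathbf s}(F)}$ with all $s_k$ equal to $\rho$. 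So what must be shown is $T\in\Pi^{\mathcal I}_{(\rho;\mathbf{p}^{*})}$ with the constant controlled by $\|T\|$. Since $\Pi^{\mathrm{as}}_{(\rho;\mathbf{p}^{*})}=\Pi^{\mathcal I_{\mathrm{as}}}_{(\rho;\mathbf{p}^{*})}$ and $\mathcal I_{\mathrm{as}}$ is the coarsest partition, I would apply Theorem~\ref{inclusion_blocks} — or rather the observation that refining the block structure only makes the summing condition weaker. Concretely, with $\mathbf q=\mathbf p^{*}$ (so $q_j=p_j$, no strict inequality needed) and $s_k$ determined by $\tfrac{1}{s_k}-|\tfrac{1}{\mathbf q}|_{j\in\bigcup_{i\ge k}I_i}=\tfrac1r-|\tfrac{1}{\mathbf p}|_{j\in\bigcup_{i\ge k}I_i}$, taking $r=\rho$ and $\mathbf p=\mathbf q=\mathbf p^{*}$ forces $s_k=\rho$ for every $k$; one checks the side condition $\tfrac1\rho-|1/\mathbf p^{*}|+|1/\mathbf p^{*}|=\tfrac1\rho>0$ holds trivially, so Theorem~\ref{inclusion_blocks} applies and gives $\Pi^{\mathcal I_{\mathrm{as}}}_{(\rho;\mathbf p^{*})}\subseteq\Pi^{\mathcal I}_{(\rho;\mathbf p^{*})}$ with inclusion norm $1$, hence the inequality with $C^{\mathbb K}_{k,\mathbf p}=C^{\mathbb K}_{m,\mathbf p}$.

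It remains to prove the optimality of the exponent $\tfrac{1}{1-|1/\mathbf p|}$ for the block formulation. For this I would argue that the block inequality, for any fixed admissible $n_1,\dots,n_d$, implies the original (fully diagonal-in-each-block) inequality, and that a smaller exponent in the block version would transfer, via a standard tensor/diagonal construction, to a smaller exponent in the non-block version, contradicting its known optimality. The cleanest route is: given $\varepsilon>0$ and an $m$-linear form $S$ on $X_{p_1}\times\cdots\times X_{p_m}$ witnessing near-optimality of the non-block exponent, build from $S$ an $m$-linear form $T$ (essentially $S$ itself, or a suitable ``inflation'') for which the block sum over $(i_1,\dots,i_d)$ dominates a constant multiple of the full sum over $(i_1,\dots,i_m)$ restricted to the block-diagonal, and then a lower exponent would violate the scalar case. \textbf{The main obstacle} I anticipate is precisely this optimality half: one has to be careful that passing to blocks does not lose the extremal behaviour — the inflation has to keep $\|T\|$ comparable to $\|S\|$ while the block-indexed sum stays comparable to the original $\ell_{\rho}$-sum of coefficients — and this is exactly the kind of bookkeeping that \cite[Theorem 2.4]{aacnnpr} already handles, so in the write-up I would either cite that optimality argument directly or reproduce the short tensor-with-$\ell_\infty$ trick that reduces the block lower bound to the known one.
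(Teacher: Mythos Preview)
There is a genuine gap in the inclusion step. Theorem~\ref{inclusion_blocks} is an inclusion \emph{within a fixed partition} $\mathcal I$: it asserts $\Pi^{\mathcal I}_{(r;\mathbf p)}\subset\Pi^{\mathcal I}_{(\mathbf s;\mathbf q)}$ for the same $\mathcal I$ on both sides. It says nothing about passing from one partition to another, so it cannot yield $\Pi^{\mathcal I_{\mathrm{as}}}_{(\rho;\mathbf p^{*})}\subset\Pi^{\mathcal I}_{(\rho;\mathbf p^{*})}$. Your fallback ``observation'' that refining the block structure only makes the summing condition weaker is in fact the wrong direction: the block index set $\Lambda_{\mathrm b}$ associated with $\mathcal I$ \emph{contains} the diagonal $\Lambda_{\mathrm{as}}$, so the $\mathcal I$-block sum has more terms than the absolutely summing sum, and in general $\Pi^{\mathrm{as}}\not\subset\Pi^{\mathcal I}$. (Already in the bilinear case absolutely summing does not imply multiple summing.) Thus the route ``non-block $\Rightarrow$ absolutely summing $\Rightarrow$ block summing via Theorem~\ref{inclusion_blocks}'' breaks at the last arrow.

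The fix is immediate once you read the non-block Hardy--Littlewood/Dimant--Sevilla-Peris theorem correctly: the sum there runs over all $(i_1,\dots,i_m)\in\mathbb N^{m}$, so it gives \emph{multiple} summability $T\in\Pi^{\mathrm{mult}}_{(\rho;\mathbf p^{*})}$, not merely absolute summability. Since $\Lambda_{\mathrm b}\subset\mathbb N^{m}$ and all the block exponents are equal to $\rho$, the block $\ell_\rho$-sum is a subsum of the full one, whence $\Pi^{\mathrm{mult}}_{(\rho;\mathbf p^{*})}\subset\Pi^{\mathcal I}_{(\rho;\mathbf p^{*})}$ trivially, and the upper bound in Theorem~\ref{hl_rewritten} follows at once. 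For comparison, the paper does not prove Theorem~\ref{hl_rewritten} at all; it simply quotes \cite[Theorem~2.4]{aacnnpr}, and later \emph{uses} Theorem~\ref{hl_rewritten} as an input to apply Theorem~\ref{inclusion_blocks} (in the proof of Theorem~\ref{new_HL_blocks}), which is the opposite logical flow from what you propose. Your optimality sketch is fine in spirit and in line with what \cite{aacnnpr} does.
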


Although the exponents in the above theorems are sharp, these results can be improved when considering anisotropic exponents (see \cite[Theorem 3.4 and Theorem 3.5]{blaise} and \cite[Corollary 2]{AR}). Our Inclusion Theorem has a direct application to the study of Hardy–Littlewood inequalities for multilinear forms. Specifically, we will use Theorem \ref{inclusion_blocks} to strengthen the aforementioned results for the case where \( p_{1}, \dots, p_{m} \leq 2m \).

\begin{theorem}
	\label{new_HL_blocks} Let $m$ be a positive integer, $\mathcal{I}%
	=\{I_{1},\dots,I_{d}\}$ a partition of $\{1,\dots,m\}$ and $\mathbf{p}\in (1,2m]^{m}$ such that $\left\vert 1/\mathbf{p}\right\vert <1$. Then, there exists a constant $D_{m,\mathbf{p},\mathbf{s}}^{\mathbb{K}}\geq1$ such that
	\begin{equation}
	\left(  \sum_{j_{1}=1}^{n}\left(  \cdots\left(  \sum_{j_{d}=1}^{n}\left\vert A\left(  \sum_{n=1}^{d}\sum_{j\in I_{n}}e_{i_{n}}\cdot e_{j}\right) \right\vert ^{s_{d}}\right)  ^{\frac{s_{d-1}}{s_{d}}}\cdots\right) ^{\frac{s_{1}}{s_{2}}}\right)  ^{\frac{1}{s_{1}}}\leq D_{m,\mathbf{p},\mathbf{s}}^{\mathbb{K}}\Vert A\Vert, \label{new_exp}
	\end{equation}
	for all positive integers $n$ and all $m$-linear forms $A:\ell_{p_{1}}%
	^{n}\times\cdots\times\ell_{p_{m}}^{n}\rightarrow\mathbb{K}$, with
	\[
	s_{k}=\left[  \frac{1}{2}-\left\vert \frac{1}{\mathbf{p}}\right\vert
	_{j\in\bigcup_{i=k}^{d}I_{i}}+\frac{1}{2m}\cdot\sum_{i=k}^{d}|I_{i}|\right]
	^{-1},\quad\text{ for }k=1,\dots,d.
	\]
	
\end{theorem}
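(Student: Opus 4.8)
The plan is to derive Theorem \ref{new_HL_blocks} as a direct consequence of the Inclusion Theorem \ref{inclusion_blocks} applied to the block version of the Hardy--Littlewood/Dimant--Sevilla-Peris inequality, Theorem \ref{hl_rewritten}. The starting point is the observation that an $m$-linear form $A$ on $\ell_{p_1}^n \times \cdots \times \ell_{p_m}^n$ (or the passage to $X_{p_j}$-spaces, with the estimate uniform in $n$) which satisfies the block inequality of Theorem \ref{hl_rewritten} is precisely a member of the class $\Pi_{(r;\mathbf{1})}^{\mathcal{I}}$ (equivalently $\Pi_{(r;\mathbf{p}')}^{\mathcal{I}}$ after the standard identification of $\ell_{p_j}$ with the correct choice of summing exponent on the sequences $(e_i)_i$) with $r = \left(1 - |1/\mathbf{p}|\right)^{-1}$, and with norm bounded by $C_{k,\mathbf{p}}^{\mathbb{K}}$. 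More precisely, testing \eqref{lambda_def} on the canonical unit vectors $e_i$ of $c_0$, which lie in the unit ball of $\ell_{p}^w(X_p)$-type norms with the appropriate bookkeeping, the quantity $\|A\|$ controls the block sum, so $A \in \Pi_{(r;\mathbf{p})}^{\mathcal{I}}$ for the correct $\mathbf{p}$-vector dictated by $X_{p_1},\dots,X_{p_m}$.

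Once $A$ is placed in $\Pi_{(r;\mathbf{p})}^{\mathcal{I}}$, I would apply Theorem \ref{inclusion_blocks} with the target exponent vector $\mathbf{q}$ chosen so that $q_j = 2m$ for every $j$ (this is exactly where the hypothesis $p_j \in (1,2m]$ enters: it guarantees $q_j \geq p_j$, i.e., the monotonicity hypothesis of the Inclusion Theorem holds, and strict inequality $q_1 > p_1$ or the non-degeneracy $\frac1r - |1/\mathbf{p}| + |1/\mathbf{q}| > 0$ can be verified). The conclusion of Theorem \ref{inclusion_blocks} then gives $A \in \Pi_{(\mathbf{s};\mathbf{q})}^{\mathcal{I}}$ with
\[
\frac{1}{s_k} = \frac{1}{r} - \left|\frac{1}{\mathbf{p}}\right|_{j \in \bigcup_{i=k}^d I_i} + \left|\frac{1}{\mathbf{q}}\right|_{j \in \bigcup_{i=k}^d I_i}.
\]
Substituting $r^{-1} = 1 - |1/\mathbf{p}|$ and $q_j^{-1} = \frac{1}{2m}$, and using $|1/\mathbf{p}| = |1/\mathbf{p}|_{j \in \bigcup_{i=1}^d I_i}$ together with $\left|\frac{1}{\mathbf{q}}\right|_{j \in \bigcup_{i=k}^d I_i} = \frac{1}{2m}\sum_{i=k}^d |I_i|$, a short algebraic simplification collapses the expression to
\[
\frac{1}{s_k} = \frac{1}{2} - \left|\frac{1}{\mathbf{p}}\right|_{j \in \bigcup_{i=k}^d I_i} + \frac{1}{2m}\sum_{i=k}^d |I_i|,
\]
which is exactly the exponent claimed in the statement. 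The norm of the inclusion being $1$ means $D_{m,\mathbf{p},\mathbf{s}}^{\mathbb{K}}$ can be taken equal to the constant $C_{k,\mathbf{p}}^{\mathbb{K}}$ from Theorem \ref{hl_rewritten}.

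The main obstacle I anticipate is not the algebra but the verification that the hypotheses of Theorem \ref{inclusion_blocks} are genuinely satisfied for the chosen $\mathbf{q}$ across the whole admissible range, and in particular checking the sign condition $\frac1r - |1/\mathbf{p}| + |1/\mathbf{q}| > 0$: this reads $1 - 2|1/\mathbf{p}| + \frac{1}{2} > 0$ after summing, which needs $|1/\mathbf{p}| < 3/4$ — not implied by $|1/\mathbf{p}| < 1$ alone — so one must instead rely on case (B) of Theorem \ref{inclusion_blocks} (strict inequality $q_1 > p_1$, which holds automatically when $p_1 < 2m$) or argue the boundary case separately, and also confirm that $s_k \geq 1$ and the $s_k$ are non-increasing in $k$ so that the mixed-norm left-hand side of \eqref{new_exp} makes sense. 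A secondary technical point is the clean passage between the finite-dimensional forms $A:\ell_{p_j}^n \to \mathbb{K}$ and the infinite setting of the summing-operator framework, ensuring the constant is independent of $n$; this is routine but should be stated, since Theorem \ref{hl_rewritten} is phrased for $X_{p_j}$ while \eqref{new_exp} is phrased with uniform-in-$n$ finite sections.
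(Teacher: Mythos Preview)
Your overall plan---use Theorem~\ref{hl_rewritten} to obtain a coincidence result, then apply Theorem~\ref{inclusion_blocks}---is the right one, but you have the inclusion running in the wrong direction, and this breaks the argument.

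The paper does \emph{not} apply Theorem~\ref{hl_rewritten} at the given $\mathbf{p}$. It applies it at the single extremal point $p_1=\cdots=p_m=2m$, where $|1/\mathbf{p}|=1/2$ and the HL exponent is $r=2$; this yields the coincidence
\[
\mathcal{L}(E_1,\dots,E_m;\mathbb{K})=\Pi^{\mathcal I}_{(2;\,(2m)^*,\dots,(2m)^*)}(E_1,\dots,E_m;\mathbb{K}).
\]
Then Theorem~\ref{inclusion_blocks} is applied with starting weak exponents $(2m)^*$ and \emph{target} weak exponents $p_j^*$ (the hypothesis $p_j\le 2m$ is exactly $p_j^*\ge (2m)^*$), landing in $\Pi^{\mathcal I}_{(\mathbf s;\,p_1^*,\dots,p_m^*)}$, which is precisely what testing on the canonical vectors of $\ell_{p_j}$ requires. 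With this orientation the sign condition reads $\tfrac12-|1/(2m)^*|+|1/\mathbf p^*|=1-|1/\mathbf p|>0$, valid on the whole range, and the algebra gives the claimed $s_k$.

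In your version you start at the given $\mathbf p$ (so $r=(1-|1/\mathbf p|)^{-1}$) and include toward $q_j=2m$. Two things go wrong. First, the resulting class $\Pi^{\mathcal I}_{(\mathbf s;\,2m,\dots,2m)}$ has the wrong weak exponents: it says nothing about forms on $\ell_{p_1}\times\cdots\times\ell_{p_m}$, only about forms tested against weakly $2m$-summable sequences. Second, your ``short algebraic simplification'' is false in general: substituting $r^{-1}=1-|1/\mathbf p|$ and $q_j^{-1}=1/(2m)$ gives
\[
\frac{1}{s_k}=\bigl(1-|1/\mathbf p|\bigr)-\left|\frac{1}{\mathbf p}\right|_{j\in\bigcup_{i=k}^d I_i}+\frac{1}{2m}\sum_{i=k}^d|I_i|,
\]
which equals the desired $\tfrac12-|1/\mathbf p|_{j\in\bigcup_{i=k}^d I_i}+\tfrac{1}{2m}\sum|I_i|$ only when $|1/\mathbf p|=\tfrac12$. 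The difficulty you flagged with the sign condition (needing $|1/\mathbf p|<3/4$) is a symptom of the same reversal; once you swap source and target as above, the obstruction disappears. Note also that the summing-class weak exponents associated with $\ell_{p_j}$ are the duals $p_j^*$, not $p_j$ themselves, so the check ``$q_j\ge p_j$'' with $q_j=2m$ is comparing the wrong quantities.
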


\begin{proof}
	Let $\mathbf{2m}:=(2m,\ldots,2m)$, $(\mathbf{2m})^\ast:=((2m)^\ast,\ldots,(2m)^\ast)$ and $\mathbf{p}^\ast:=(p_1^\ast,\ldots,p_m^\ast)$. Let also $r:=2=\left(  1-\left\vert 1/\mathbf{2m}\right\vert \right)  ^{-1}$. Thus
	\[
	\frac{1}{2}-\left\vert \frac{1}{\mathbf{(2m)^{\ast}}}\right\vert +\left\vert
	\frac{1}{\mathbf{p^{\ast}}}\right\vert =\frac{1}{2}-m+\frac{1}{2}+m-\left\vert
	\frac{1}{\mathbf{p}}\right\vert =1-\left\vert \frac{1}{\mathbf{p}}\right\vert
	>0
	\]
	and, from Theorem \ref{inclusion_blocks}, we have
	\[
	\Pi_{\left(  2;(\mathbf{2m})^{\ast}\right)  }^{\mathcal{B}_{\mathcal{I}}}\left(
	E_{1},\dots,E_{m};\mathbb{K}\right)  \subset\Pi_{\left(  \mathbf{s};\mathbf{p}^{\ast}\right)  }^{\mathcal{B}_{\mathcal{I}}}\left(  E_{1}%
	,\dots,E_{m};\mathbb{K}\right)  ,
	\]
	for any Banach spaces $E_{1},\dots,E_{m}$, with
	\begin{align*}
	s_{k}  &  =\left[  \frac{1}{2}-\left\vert \frac{1}{\mathbf{(2m)^{\ast}}%
	}\right\vert _{j\in\bigcup_{i=k}^{d}I_{i}}+\left\vert \frac{1}{\mathbf{p^{\ast
	}}}\right\vert _{j\in\bigcup_{i=k}^{d}I_{i}}\right]  ^{-1}\\
	&  =\left[  \frac{1}{2}-\sum_{i=k}^{d}|I_{i}|+\frac{1}{2m}\cdot\sum_{i=k}%
	^{d}|I_{i}|+\sum_{i=k}^{d}|I_{i}|-\left\vert \frac{1}{\mathbf{p}}\right\vert
	_{j\in\bigcup_{i=k}^{d}I_{i}}\right]  ^{-1}\\
	&  =\left[  \frac{1}{2}-\left\vert \frac{1}{\mathbf{p}}\right\vert
	_{j\in\bigcup_{i=k}^{d}I_{i}}+\frac{1}{2m}\cdot\sum_{i=k}^{d}|I_{i}|\right]
	^{-1},\quad\text{ for each }k=1,\dots,d.
	\end{align*}
	
Now by using Theorem \ref{hl_rewritten},
\[
\Pi_{\left(  2;(\mathbf{2m})^{\ast}\right)  }^{\mathcal{B}_{\mathcal{I}}}\left(
E_{1},\dots,E_{m};\mathbb{K}\right)  =\mathcal{L}\left(  E_{1},\dots
,E_{m};\mathbb{K}\right)
\]
and thus
\[
\Pi_{\left(  \mathbf{s};\mathbf{p}^{\ast}\right)
}^{\mathcal{B}_{\mathcal{I}}}\left(  E_{1},\dots,E_{m};\mathbb{K}\right)  =\mathcal{L}%
\left(  E_{1},\dots,E_{m};\mathbb{K}\right)  ,
\]
for all Banach spaces $E_{1},\dots,E_{m}$. Using the standard isometries
between $\mathcal{L}(X_{p},X)$ and $\ell_{p^{\ast}}^{w}(X)$, for
$1<p\leq\infty$, the proof is completed.
\end{proof}

In order to clarify the new result, we illustrate the following simpler case
that provides better exponents than the estimates of Theorem
\ref{hl_rewritten}.

\begin{corollary}\label{corollary43}
	Let $1\leq d \leq m$ and let $n_{1},\ldots,n_{d}$ be positive integers such that
	$n_{1}+\cdots+n_{d} =m$. If $m<p\leq2m$, then
	\[
	\left(  \sum_{j_{1}=1}^{\infty}  \left(  \cdots\left(  \sum_{j_{d}=1}%
	^{\infty}  \left\vert A \left(  e_{i_{1}}^{n_{1}}, \ldots, e_{i_{d}}^{n_{d}}
	\right)  \right\vert ^{s_{d}}  \right) ^{\frac{s_{d-1}}{s_{d}}} \cdots\right)
	^{\frac{s_{1}}{s_{2}}} \right) ^{\frac{1}{s_{1}}} \leq D_{m,\mathbf{p}%
		,\mathbf{s}}^{\mathbb{K}} \|A\|,
	\]
	for all $m$-linear forms $A:\ell_{p}^{n}\times\cdots\times\ell_{p}^{n}\rightarrow\mathbb{K}$, with
	\[
	s_{k}=\left[  \frac{1}{2}-\left(  n_{k}+\cdots+n_{d} \right)  \cdot\left(
	\frac{1}{p}-\frac{1}{2m}\right)  \right] ^{-1}, \quad\text{ for } \quad
	k=1,\dots,d.
	\]
\end{corollary}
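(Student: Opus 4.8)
The plan is to obtain Corollary~\ref{corollary43} as the particular instance of Theorem~\ref{new_HL_blocks} in which all the exponents $p_{1},\dots,p_{m}$ coincide and the partition $\mathcal{I}$ is made of consecutive blocks. Concretely, I would put $p_{1}=\cdots=p_{m}=p$ and choose
\[
I_{1}=\{1,\dots,n_{1}\},\ I_{2}=\{n_{1}+1,\dots,n_{1}+n_{2}\},\ \ldots,\ I_{d}=\{n_{1}+\cdots+n_{d-1}+1,\dots,m\},
\]
so that $\mathcal{I}=\{I_{1},\dots,I_{d}\}$ is a partition of $\{1,\dots,m\}$ with $\left| I_{k}\right| =n_{k}$. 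With this choice the tuple $\sum_{n=1}^{d}\sum_{j\in I_{n}}e_{i_{n}}\cdot e_{j}$ is exactly $\left(e_{i_{1}}^{n_{1}},\dots,e_{i_{d}}^{n_{d}}\right)$, so that (after sending the truncation parameter to infinity) the left-hand side of \eqref{new_exp} is precisely the left-hand side of the inequality to be proved.

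First I would verify that Theorem~\ref{new_HL_blocks} applies to this data. Since $m<p\leq 2m$ and $m\geq 1$, we have $p\in(1,2m]$, hence $\mathbf{p}=(p,\dots,p)\in(1,2m]^{m}$, and moreover $\left|\frac{1}{\mathbf{p}}\right|=m/p<1$. Thus Theorem~\ref{new_HL_blocks} furnishes a constant $D_{m,\mathbf{p},\mathbf{s}}^{\mathbb{K}}\geq 1$ for which \eqref{new_exp} holds with
\[
s_{k}=\left[\frac{1}{2}-\left|\frac{1}{\mathbf{p}}\right|_{j\in\bigcup_{i=k}^{d}I_{i}}+\frac{1}{2m}\sum_{i=k}^{d}\left| I_{i}\right|\right]^{-1},\qquad k=1,\dots,d.
\]
It then only remains to rewrite these exponents in the stated form: since every $p_{j}$ equals $p$ and $\left|\bigcup_{i=k}^{d}I_{i}\right|=\sum_{i=k}^{d}\left| I_{i}\right|=n_{k}+\cdots+n_{d}$, we get $\left|\frac{1}{\mathbf{p}}\right|_{j\in\bigcup_{i=k}^{d}I_{i}}=(n_{k}+\cdots+n_{d})/p$ and $\frac{1}{2m}\sum_{i=k}^{d}\left| I_{i}\right|=(n_{k}+\cdots+n_{d})/(2m)$, whence
\[
s_{k}=\left[\frac{1}{2}-(n_{k}+\cdots+n_{d})\left(\frac{1}{p}-\frac{1}{2m}\right)\right]^{-1},\qquad k=1,\dots,d,
\]
which is exactly the value appearing in the corollary.

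There is no genuine obstacle in this argument: all the analytic content is already packaged in Theorem~\ref{new_HL_blocks}. The only points that require a moment's care are the identification of the block notation $\sum_{n=1}^{d}\sum_{j\in I_{n}}e_{i_{n}}\cdot e_{j}$ with $\left(e_{i_{1}}^{n_{1}},\dots,e_{i_{d}}^{n_{d}}\right)$ for the chosen consecutive partition, and the passage from the finite sums (up to $n$) in Theorem~\ref{new_HL_blocks} to the sums over $\mathbb{N}$ in the statement; the latter is immediate because the constant produced by Theorem~\ref{new_HL_blocks} does not depend on $n$ (and is in any case vacuous when $A$ acts on the finite-dimensional spaces $\ell_{p}^{n}$, in which case the series are genuinely finite).
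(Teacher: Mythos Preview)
Your proposal is correct and is exactly the approach the paper intends: Corollary~\ref{corollary43} is stated without proof precisely because it is the immediate specialization of Theorem~\ref{new_HL_blocks} to $p_{1}=\cdots=p_{m}=p$ and the consecutive-block partition $\mathcal{I}$ with $|I_{k}|=n_{k}$. Your verification of the hypotheses and the simplification of the exponents $s_{k}$ are accurate.
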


Notice that the exponents provided are
\[
s_{1}=\frac{p}{p-m},\dots,s_{d}=\frac{2mp}{mp+pn_{d}-2mn_{d}},
\]
while Theorem \ref{hl_rewritten} provide the exponents
\[
s_{1} = \cdots= s_{d} = \frac{p}{p-m}.
\]
Corollary \ref{corollary43} recovers \cite[Corollary 2]{AR} and, when $d=1$, recovers a result of \cite{zal}. The following example is intended to illustrate this.

\begin{example}
Suppose $m=3,d=2,p=4,I_{1}=\{1,2\}$ and $I_{2}=\{3\}$. By Theorem
\ref{hl_rewritten} we know that \eqref{new_exp} holds with
\(
s_{k}=4 \text{ for } k=1,2,
\)
whereas by Theorem \ref{new_HL_blocks} we have
\(
s_{1}=4 \text{ and } s_{2}=12/5.
\)\
\end{example}

\subsection{A new Grothendieck inclusion result}

Grothendieck's famous theorem for absolutely summing linear operators has been extended to the $m$-linear setting (see \cite[Theorems 5.1 and 5.2]{bombal} and \cite{dav}). More specifically, every continuous $m$-linear operator \( T \colon \ell_{1} \times \cdots \times \ell_{1} \to \ell_{2} \) is multiple $p$-summing for all \( p \in [1, 2] \). A natural question arises: for which pairs \((q, p)\) does the coincidence  
\[
\Pi_{(q; p)}^{\mathrm{ms}} \left( ^{m}\ell_{1}; \ell_{2} \right) = \mathcal{L} \left( ^{m}\ell_{1}; \ell_{2} \right)
\]  
hold? In \cite{reg} the authors present a definitive answer for this question.

\begin{theorem}\cite{reg}\label{dan}
Let $m$ be a positive integer and $1\leq p\leq q<\infty.$ Then
\(
\Pi_{(q;p)}^{\mathrm{ms}} \left(  ^{m}\ell_{1};\ell_{2}\right)  =\mathcal{L}\left(^{m}\ell_{1};\ell_{2}\right)
\)
if and only if $p\leq2$ or $q>p>2.$
\end{theorem}

Given the various generalizations of this theory to anisotropic settings, a natural question arises: For which multi-indices \((\mathbf{q}; \mathbf{p}) = (q_{1}, \ldots, q_{m}; p_{1}, \ldots, p_{m})\), with \(q_i \neq q_j\) or \(p_i \neq p_j\) for some \(i,j \in \{1, \ldots, m\}\), does the equality  
\[
\Pi_{(\mathbf{q}; \mathbf{p})}^{\mathrm{ms}} \left(^{m}\ell_{1}; \ell_{2}\right) = \mathcal{L} \left(^{m}\ell_{1}; \ell_{2}\right)
\]
hold? In \cite[Proposition 4.1(a)]{bot}, it was shown that  
\(
\Pi_{(q; 1, \ldots, 1, q)}^{\mathrm{ms}} \left(^{m}\ell_{1}; \ell_{2}\right)
= \mathcal{L} \left(^{m}\ell_{1}; \ell_{2}\right)
\)
for all \(q \geq 2\). As a consequence of Theorem \ref{inclusion_blocks}, we provide two additional solutions to this problem.

\begin{proposition} Let $m$ be a positive integer, $p\leq2$ or $r>p>2$ and  $\mathbf{s},\mathbf{q }\in[1,\infty)^{m}$. If $1/r - m/p  +\left|  1/\mathbf{q}\right|  > 0$, and $q_{k} \geq p$ and
\[
\frac{1}{s_{k}} - \left|  \frac{1}{\mathbf{q}}\right|  _{\geq k} = \frac{1}{r}
- \frac{m-k+1}{\mathbf{p}}
\]
for $k=1,\dots,m$, then
\[
\Pi_{(\mathbf{s};\mathbf{q})}^{\mathrm{ms}} \left(^{m}\ell_{1};\ell_{2}\right) = \mathcal{L}\left(^{m}\ell_{1};\ell_{2}\right).  
\]
\end{proposition}

\begin{proof}
By using Theorem \ref{inclusion_blocks}, with $\mathcal{I} = \{\{1\}, \ldots, \{m\}\}$, we get that
\[
\Pi_{(r;p)}^{m} \left(^{m}\ell_{1};\ell_{2}\right)  \subset\Pi_{(\mathbf{s};\mathbf{q})}^{m} \left(^{m}\ell_{1};\ell_{2}\right),
\]
with
\[
\frac{1}{s_{k}} - \left|  \frac{1}{\mathbf{q}}\right|  _{\geq k} = \frac{1}{r}-\frac{m-k+1}{\mathbf{p}}, \ \ \ k=1,\ldots,m.
\]
Combining this with Theorem \ref{dan}, the result follows.
\end{proof}

\vskip 10mm


\section*{Acknowledgement}

The first author was supported in part by CNPq Grants 312167/2021-0, 406457/2023-9, and 403964/2024-5. The last author was partially supported by CAPES.

\end{document}